\newtheorem{theorem}{Theorem}
\newtheorem{lemma}[theorem]{Lemma}
\newtheorem*{remark}{Remark}
\def\E{{\mathbb E}}
\def\P{{\mathbb P}}
\def\N{{\mathbb N}}
\newcommand{\ER}{Erd\H{o}s-R\'enyi }
\begin{document}


\title{Limiting shape of the Depth First Search tree\\ in an \ER graph}
\date{}
\author{\textsc{Nathana\"el Enriquez}, \, \textsc{Gabriel Faraud} \, and \, \textsc{Laurent M\'enard}}

\maketitle

\centerline{\it Universit\'e Paris Nanterre}

\bigskip
\bigskip
\bigskip

{\leftskip=2truecm \rightskip=2truecm \baselineskip=15pt \small

\begin{abstract}
We show that the profile of the tree constructed by the Depth First Search Algorithm in the giant component of an Erd\H{o}s-R\'enyi graph with $N$ vertices and connection probability $c/N$ converges to an explicit deterministic shape. This makes it possible to exhibit a long non-intersecting path of length $\left( \rho_c - \frac{\mathrm{Li}_2(\rho_c)}{c} \right) \times N$, where $\rho_c$ is the density of the giant component.
\bigskip

\noindent{\slshape\bfseries Keywords.} Erd\H{o}s-R\'enyi graphs, Depth First Search Algorithm.
\bigskip

\noindent{\slshape\bfseries 2010 Mathematics Subject
Classification.} 60K35, 82C21, 60J20, 60F10.
\end{abstract}

} 

\bigskip
\bigskip


\section{Introduction}

The celebrated Erd\H{o}s-Renyi model of random graphs \cite{ER} exhibits a phase transition when the average degree in the graph is $1$. Above this threshold, the graph contains with high probability a unique connected component of macroscopic size called the giant component. The geometry of this giant component has been the subject of numerous research articles and we refer to the monographs by Bollob\'as \cite{B}, Durrett \cite{D} or Frieze and Karo\'nski \cite{F} for extensive surveys. Some results are striking by their sharpness. This is the case for the typical distance between vertices (see Durrett \cite{D}) and the diameter (see Riordan and  Wormald \cite{RW}) which are both of logarithmic order in the number of vertices. One could ask whether this small world effect prevents the graph from containing a long simple path. This is not the case and Ajtai, Koml\'os and Szemerédi \cite{AKS} proved that there exists a simple path of linear length in the supercritical regime, solving a conjecture by P. Erd\H{o}s \cite{E}. In the recent paper \cite{KS}, Krivelevich and Sudakov propose a simple proof of the phase transition which also exhibits a simple path of linear length in the supercritical regime. However, they only show the existence of a simple path of length $\varepsilon$ times the number of vertices in the graph for some positive $\varepsilon$. Their strategy is to analyse the classical Depth First Search algorithm (DFS) we now describe informally.

For any finite graph $G$, the DFS is an exploration process on $G$. Starting at one vertex, say $v$, it jumps to any neighbor of $v$, continues to a neighbor of this new vertex and so on, with the restriction that the process is not allowed to visit a vertex twice. The process will draw a non-intersecting path in the graph, and ultimately get stuck. The rule is then to make a step back (that is towards $v$) and start exploring again. It is clear that, at any time, the set of visited edges is a tree. Eventually, the process will completely visit the connected component of $v$ and draw a spanning tree of it.

\bigskip

In this paper, we study the length of the longest simple path constructed by the DFS when it is started at a vertex belonging to the giant component (Theorem~\ref{th:longest}). In fact, we even get the scaling limit of the spanning tree constructed by the DFS (see Theorem~\ref{th:main}).
This gives an explicit lower bound for the longest simple path in the graph:
\begin{theorem}
\label{th:longest}
Let $H_N$ be the length of the longest simple path in an Erd\H{o}s-R\'enyi random graph with $N$ vertices and parameter $c/N$. Then, in probability
$$\liminf_{N\to \infty} \frac{H_{N}}{N} \ge \rho_c-\frac{Li_2(\rho_c)}{c},$$
where $Li_2$ stands for the dilogarithm function and $\rho_c$ is the survival probability of a Galton-Watson branching process with Poisson($c$) offspring distribution characterized by the equation
\begin{equation}
\label{relrho}
1 - \rho_c = \exp (-c \, \rho_c ).
\end{equation}
\end{theorem}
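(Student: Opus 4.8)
The plan is to deduce Theorem~\ref{th:longest} from Theorem~\ref{th:main}, using the elementary observation that throughout the DFS the current stack spells out a simple path of the graph.

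First, the reduction. Run the DFS on $G=G(N,c/N)$, restarting it at an as-yet-unvisited vertex each time the stack becomes empty, until all of $G$ has been explored. At every step, the vertices stored in the stack, read from bottom to top, form a simple path of $G$: two consecutive stack elements are joined by a tree edge of the DFS (hence by an edge of $G$), and no vertex occurs twice. Hence $H_N$ is at least the height of the DFS spanning forest, and in particular $H_N\ge \mathrm{ht}(\mathcal T_N)$, where $\mathrm{ht}$ denotes the height of a rooted tree and $\mathcal T_N$ is the DFS tree of the giant component $C_1$ (one of the trees of the forest). So it suffices to prove that for every $\varepsilon>0$, $\P\big(\mathrm{ht}(\mathcal T_N)\ge(\rho_c-\mathrm{Li}_2(\rho_c)/c-\varepsilon)N\big)\to1$.

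Next I would pass from the profile of $\mathcal T_N$ --- whose convergence, once heights and cardinalities are rescaled by $N$, to an explicit deterministic limit is the content of Theorem~\ref{th:main} --- to a lower bound on the height. Let $\ell_c$ be the right endpoint of the support of the limiting profile; by the explicit description in Theorem~\ref{th:main} this profile is positive on the interior $(0,\ell_c)$ of its support. Then, for any $x\in(0,\ell_c)$ and any small $\delta>0$, the limit puts positive mass on depths lying in $\big((x-\delta)N,(x+\delta)N\big)$, so with probability tending to $1$ the tree $\mathcal T_N$ carries a vertex at such a depth, whence $\mathrm{ht}(\mathcal T_N)\ge(x-\delta)N$ with probability tending to $1$. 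Letting $x\uparrow\ell_c$ and $\delta\downarrow0$ gives $\liminf_N \mathrm{ht}(\mathcal T_N)/N\ge\ell_c$ in probability, and, with the previous paragraph, $\liminf_N H_N/N\ge\ell_c$ in probability. Note that weak convergence of the profile yields only a lower bound on the height --- a thin spike could in principle make the true height larger --- but a lower bound is exactly what is asked.

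It then remains to identify $\ell_c$ with $\rho_c-\mathrm{Li}_2(\rho_c)/c$, a deterministic computation carried out from the formula for the limit shape in Theorem~\ref{th:main}. Heuristically: parametrizing the exploration by the fraction $s$ of vertices already visited, the vertex currently on top of the stack is attached to a fresh $\mathrm{Poisson}$-distributed set of unexplored vertices; balancing the instantaneous rates of pushes against pops, while accounting for the pairs a vertex has already probed by the time it returns to the top of the stack --- the effect that produces the series $\sum_{k\ge1}s^k/k^2=\mathrm{Li}_2(s)$ --- shows that the rescaled stack size, viewed as a function of its time parameter, has maximum $\rho_c+\tfrac1c\int_0^{\rho_c}\frac{\ln(1-t)}{t}\,\mathrm{d}t=\rho_c-\tfrac1c\mathrm{Li}_2(\rho_c)$, where one also uses \eqref{relrho}. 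I expect the real difficulty to be located entirely in Theorem~\ref{th:main}: pinning down the limiting profile demands a tight control of the DFS on $G(N,c/N)$, and in particular of the already-probed pairs upon revisiting a vertex on the stack --- precisely the feature responsible for the dilogarithm rather than an elementary expression. Granting Theorem~\ref{th:main}, the two steps above (the reduction to the height of a DFS tree, and the passage from profile convergence to a height lower bound) are soft.
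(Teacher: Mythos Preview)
Your approach is essentially the paper's: deduce Theorem~\ref{th:longest} from Theorem~\ref{th:main} by noting that the DFS stack is a simple path, so $H_N\ge\max_n X_n$, and then reading off the maximum of the limiting curve. Two small points of divergence are worth flagging.

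First, Theorem~\ref{th:main} is a statement about the \emph{contour process} $t\mapsto X_{\lceil tN\rceil}/N$, not about the depth profile (number of vertices at each height). Your phrasing in terms of ``support of the limiting profile'' and ``positive mass on depths'' suggests you are thinking of the latter. The paper's route is more direct: uniform convergence $X_{\lceil tN\rceil}/N\to h(t)$ immediately gives $\max_n X_n/N\to\max_t h(t)$ in probability, with no need to pass through an occupation-measure argument. Your argument can be made to work, but it is a detour.

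Second, the identification of the maximum is a one-line evaluation rather than the heuristic you sketch. From the parametrization in Theorem~\ref{th:main}, the increasing branch has height $g(\rho)$ with $\rho$ decreasing from $\rho_c$ to $0$, so the maximum is
\[
g(0)=\frac{1}{c}\Bigl(\log\frac{1}{1-\rho_c}-\mathrm{Li}_2(\rho_c)\Bigr)
     =\frac{1}{c}\bigl(c\rho_c-\mathrm{Li}_2(\rho_c)\bigr)
     =\rho_c-\frac{\mathrm{Li}_2(\rho_c)}{c},
\]
where the middle equality uses \eqref{relrho} in the form $\log(1-\rho_c)=-c\rho_c$. Your handling of the conditioning on $\mathbf S$ via restarting the DFS and picking out the tree spanning the giant component is a clean way to remove that conditioning, which the paper leaves implicit.
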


\bigskip

It is interesting to consider the behavior of this lower bound when $c$ is large. As $Li_2(1)=\pi^2/6$, we have the asymptotic expansion
$$\liminf_{N\to \infty} \frac{H_{N}}{N}\geq 1-\frac{\pi^2}{6c}+o\left( \frac 1 c \right),$$
improving the former lower bound $1-2.21/c$ derived by Fernandez de la Vega in \cite{V} as mentioned in \cite{AKS}. It is natural to ask whether the bound of Theorem \ref{th:longest} is optimal or not. We did not find any evidence in either direction.

\section{The Depth First Search algorithm and its scaling limit}

In the following, we denote by $\mathcal{G}_N=(V_N, E_N)$ an Erd\H{o}s-R\'enyi random graph with $N$ vertices and parameter $c/N$. The vertex set of $\mathcal{G}_N$ is $V_N=\{1,2,\dots,N\}$ and for a pair $(i,j)\in \N^2, \, i \neq j$ the edge $\{i,j\}$ belongs to $E_N$ with probability $c/N$, independently of all the others. As already mentioned if $c>1$ then there is a constant $\rho_c$ such that the largest connected component of $\mathcal{G}_N$ grows asymptotically like $\rho_c \, N$ as $N$ goes to infinity, where, for any $c>1$, the constant $\rho_c$ is characterized by the fixed point equation \eqref{relrho}.

\subsection{The Depth First Search algorithm}

Let us formally define the DFS algorithm on $\mathcal G _N$ by induction. At each step we define the following objects: 
\begin{itemize}
\item $A_n$ is an ordered set of vertices, called {\it active vertices} at time $n$. With a slight abuse of notation, we will sometimes also denote by $A_n$ the unordered set of vertices of the ordered list $A_n$.
\item $a_n$ is the last element of $A_n$,
\item $S_n$ is a set of vertices, called \emph{sleeping vertices},
\item $R_n = \{1,\ldots, N\} \setminus (A_n \cup S_n)$ is also a set of vertices, called the \emph{retired vertices}.
\end{itemize}
Initially we set:
$$
\begin{cases}
A_0 &=(1),\\
S_0 &=\{2,3,\dots, N\}, \\
R_0 &=\emptyset.
\end{cases}
$$
The process stops when $A_n = \emptyset$. This occurs when $n = 2 |\mathcal C (1)| -1$, where $|\mathcal C (1)|$ is the number of vertices in the connected component of $1$ in $\mathcal G_N$. Knowing $A_n, S_n$ and $R_n$ we define $A_{n+1}, S_{n+1}$ and $R_{n+1}$ according to the following rules:
\begin{itemize}
\item If $a_n$ has a neighbor in $S_n$, we set
$$
\begin{cases}
a_{n+1} &=\inf  \{ k \in S_n: \{a_n, k\} \in E_N\},\\
A_{n+1} &=A_n\cup a_{n+1} \mbox{ (that is, the concatenation of } A_n \mbox{ and } a_{n+1}),\\
S_{n+1} &=S_n \backslash \{a_{n+1}\},\\
R_{n+1} &=R_n.
\end{cases}
$$

\item If however, $a_n$ has no neighbor in $S_n$, we set
$$
\begin{cases}
A_{n+1}&=A_n\backslash a_{n} \mbox{ (that is } A_n \mbox{ with its last element removed) },
\\ S_{n+1}&=S_n,
\\ R_{n+1}&=R_n\cup \{a_n\}.
\end{cases}
$$
\end{itemize}
The sequence of vertices $(a_n)$ is a nearest neighbor walk on the connected component of $1$ and its trace is a spanning tree of this component. Moreover, the chronology of the construction makes this tree rooted and planar. By construction, the list $A_n$ is the ancestral line between $a_n$ and $1$ in this spanning tree. The set $S_n$ is the set of vertices that have not been visited by the walk $(a_n)$ before time $n$. The vertices in $R_n$ are those for which the construction of the process ensures that they have no neighbor in $S_n$. 

\begin{remark}
From a probabilistic point of view it might seem unnatural to take the neighbor with smallest index in the definition of $(a_n)$ instead of, for example, picking a neighbor at random. As it will become clear in the proofs, this does not change the asymptotics of the process.
\end{remark}

\subsection{Scaling limit of the DFS}

At each step, the current height of the walker in the spanning tree constructed by this algorithm is denoted by $X_n = |A_n| -1$. This defines a Dyck path $X = (X_n)_{0 \leq n \leq 2 |\mathcal C (1)| -1}$: it starts at $0$, has increments in $\{ -1 , +1\}$ and is non-negative except at its final value $-1$. The process $X$ is the canonical contour process in clockwise order of the spanning tree constructed by the DFS algorithm. Because of all the information it encodes, the process $X$ will be our main object of interest. Since we are mainly interested in the geometry of the giant component of $\mathcal G_N$, we study the process $X$ conditional on the event $\mathbf S$ that $1$ belongs to the largest component of $\mathcal{G}_N$. This event has asymptotic probability $\rho_c$. Our main result is the convergence of the process $X$ to a deterministic curve, illustrated in Figure \ref{fig:limit}:

\begin{theorem}\label{th:main}
Conditional on $\mathbf S$, the following limit holds in probability for the topology of uniform convergence: 
$$\lim_{N\to \infty} \frac{X_{\lceil tN\rceil }}{N}=h(t),$$
where the function $h$ is continuous and defined on the interval $[0, 2 \rho_c]$. The graph $(t,h(t))_{t \in [0, 2 \rho_c]}$ can be divided into a first increasing part and a second decreasing part. These parts are respectively parametrized by:
\begin{align*}
(t,h(t))_{0 \leq t \leq f(0)} &= \left( f(\rho),g(\rho) \right)_{0\leq \rho \leq \rho_c},\\
(t,h(t))_{f(0) \leq t \leq 2 \rho_c} &= \left( f(\rho) + 2 \rho \left( 1 - \frac{f(\rho) + g(\rho)}{2} \right),g(\rho) \right)_{0\leq \rho \leq \rho_c},
\end{align*}
where the functions $f$ and $g$ are given by
\begin{align*}
f(\rho) &= \frac{1}{c}\left[Li_2(\rho_c)-Li_2(\rho)+\log\frac{1-\rho_c}{1-\rho}-2\left(\frac{\log(1-\rho_c)}{\rho_c}-\frac{\log(1-\rho)}{\rho} \right)\right],\\
g(\rho) &= \frac{1}{c}\left[Li_2(\rho)-Li_2(\rho_c)+\log\frac{1-\rho}{1-\rho_c}\right],
\end{align*}
and $Li_2$ stands for the dilogarithm function.
\end{theorem}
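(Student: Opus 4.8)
The plan is to analyze the DFS as a Markovian exploration and identify the deterministic limit of the triple $(|A_n|, |S_n|, |R_n|)$ after rescaling by $N$. The key observation is that, conditionally on the past of the exploration up to time $n$, the edges between $a_n$ and the $|S_n|$ sleeping vertices have not been examined yet, so each such edge is present independently with probability $c/N$. Hence at each step the walk goes up with probability $1 - (1 - c/N)^{|S_n|}$ and down otherwise. Writing $s = |S_n|/N$, this up-probability is approximately $1 - \ee^{-cs}$. Since $|S_n|$ only decreases (by one at each up-step) and $|R_n|$ only increases (by one at each down-step), and $|A_n| + |S_n| + |R_n| = N$, the process is driven by a single slowly-varying quantity. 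I would set up a fluid limit: let $s(t)$, $r(t)$, and $h(t)$ be the rescaled limits of $|S_{\lceil tN\rceil}|$, $|R_{\lceil tN\rceil}|$, $X_{\lceil tN\rceil}$. Then $s'(t) = -p(t)$, $r'(t) = -(1-p(t))$ wait — $r'(t) = 1 - p(t)$, and $h'(t) = 2p(t) - 1$ where $p(t) = 1 - \ee^{-c\,s(t)}$, with $s(0) = 1$, $r(0) = 0$, $h(0) = 0$, and the constraint $s(t) + r(t) + h(t) = 1$ (the $-1$ in $|A_n| = X_n + 1$ being negligible after rescaling).

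The first phase runs while $S$ is still being depleted and new vertices keep getting discovered; it ends at the time $f(0)$ when $h$ reaches its maximum, i.e. when $p(t) = 1/2$, equivalently $\ee^{-cs} = 1/2$. Eliminating time in favor of a natural parameter is where the functions $f$ and $g$ come from: I expect the right parameter to be $\rho$, the rescaled number of vertices \emph{ever made active} (so $\rho = 1 - s$ in the first phase), which ranges over $[0,\rho_c]$ precisely because the final number of explored vertices is $\rho_c N$ — the connected component of vertex $1$, conditioned on $\mathbf{S}$, has size $\rho_c N + o(N)$. A change of variables $\d t = \d\rho / p$ together with $h' / s' = -(2p-1)/p$ should integrate to closed forms involving $\mathrm{Li}_2$: indeed $\int \frac{2p-1}{p}\,\frac{\d\rho}{\,?\,}$ with $p = 1 - \ee^{-c(1-\rho)}$ produces logarithms and dilogarithms after substituting $u = 1 - \rho$, matching the stated $f(\rho)$ and $g(\rho)$. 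The second, decreasing phase is the "unwinding": here no new vertices are discovered among the remaining sleeping ones along the current branch, the walk retreats, and one re-parametrizes by the same $\rho$ running back down; the extra term $2\rho\left(1 - \frac{f(\rho)+g(\rho)}{2}\right)$ in the time coordinate accounts for the total number of down-steps already performed, which is $r(t)$ expressed through $f$ and $g$ via the constraint.

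To make this rigorous I would: (i) show that the component of $1$ under $\mathbf S$ has $\rho_c N (1 + o(1))$ vertices and that the DFS tree spans it, so the time horizon is $2\rho_c N(1+o(1))$; (ii) prove a concentration/fluid-limit statement — e.g. via a supermartingale or Azuma-type argument, or by comparison with the standard differential equation method — that $\frac1N(|S_{\lceil tN\rceil}|, |R_{\lceil tN\rceil}|)$ converges uniformly in probability to $(s(t), r(t))$, the unique solution of the ODE above; (iii) deduce uniform convergence of $X_{\lceil tN\rceil}/N$ to $h(t) = 1 - s(t) - r(t)$; and (iv) verify by direct computation that the curve $(t, h(t))$ admits the stated parametrization by $\rho$. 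Step (ii) is the main obstacle: one must handle the fact that the increment distribution depends on $|S_n|$ through the delicate quantity $1 - (1-c/N)^{|S_n|}$ uniformly over the whole trajectory, control fluctuations near the turning point $f(0)$ where $p(t) = 1/2$ and the drift of $X$ vanishes (so $X$ behaves diffusively there and one needs the fluid limit to survive an $O(\sqrt N)$ fluctuation), and rule out that conditioning on $\mathbf S$ — a probability-$\rho_c$ event — distorts the exploration; the cleanest route is probably to run the DFS on the \emph{whole} graph (exploring all components), show the fluid limit holds unconditionally for the concatenated exploration, and then identify the excursion corresponding to the giant component as the unique macroscopic one.
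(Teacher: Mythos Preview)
Your fluid-limit strategy has a genuine error at its foundation. You assert that ``conditionally on the past of the exploration up to time $n$, the edges between $a_n$ and the $|S_n|$ sleeping vertices have not been examined yet,'' and from this derive the one-step up-probability $p = 1 - (1-c/N)^{|S_n|} \approx 1 - e^{-cs}$. This is only true the \emph{first} time the walker visits $a_n$. When the walker backtracks to a previously visited vertex $v$, the edges from $v$ to every sleeping vertex were already revealed at the first visit (the current $S_n$ is a subset of the sleeping set at that earlier time). Whether the walk goes up from $v$ now depends on how many of the originally-discovered neighbours of $v$ are still sleeping, which is not a function of $|S_n|$ alone. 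Since a positive fraction of steps are backtracks, this is not a negligible boundary effect.

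The error propagates to the limit: with your ODE one gets $h'(t) = 2p - 1 = 1 - 2e^{-cs}$, whereas the correct derivative (obtained in the paper) is $h'(t) = \rho_{cs}/(2-\rho_{cs})$, where $\rho_{cs}$ is the survival probability of a Poisson($cs$) Galton--Watson tree, satisfying $1-\rho_{cs}=e^{-cs\,\rho_{cs}}$. These are different functions of $s$ (check $c=2$, $s=1$: your $p\approx 0.865$, the correct effective up-fraction is $1/(2-\rho_c)\approx 0.831$), so your ODE integrates to the wrong curve and will not reproduce the stated $f,g$. The appearance of $\rho_{cs}$ rather than $1-e^{-cs}$ is the whole point: between two ``spine'' vertices the walk does not take a single Bernoulli step but makes a full excursion through several small (subcritical) components before hitting the giant one, and the statistics of that excursion are governed by the giant-component density $\rho_{cs}$. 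The paper captures this by introducing pseudo-renewal times $\tau_i$ at which the walk commits to a new spine vertex, computing $\E[\tau_{i+1}-\tau_i\mid\tau_i]\approx 2/\rho_{cs}-1$ via the decomposition ``geometric number of small components, each a conditioned Galton--Watson tree,'' and then running a martingale concentration argument on the $\tau_i$'s. Your differential-equation framework could in principle be repaired, but only after replacing the one-step transition by an analysis of these excursions --- at which point you are essentially redoing the paper's argument.
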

\begin{figure}[ht!]
\begin{tabular}{ccc}
\includegraphics[scale=0.24]{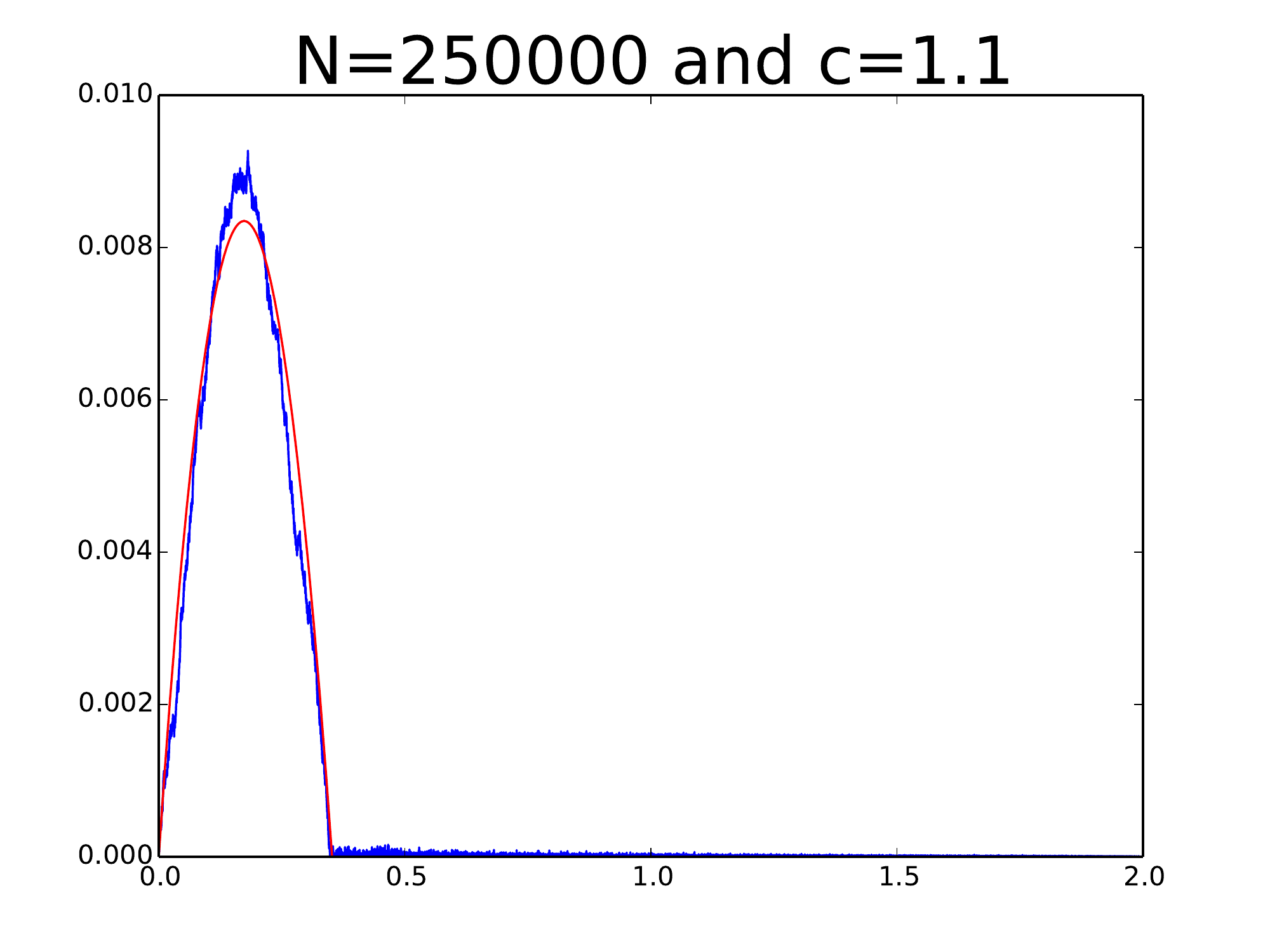}
&
\includegraphics[scale=0.24]{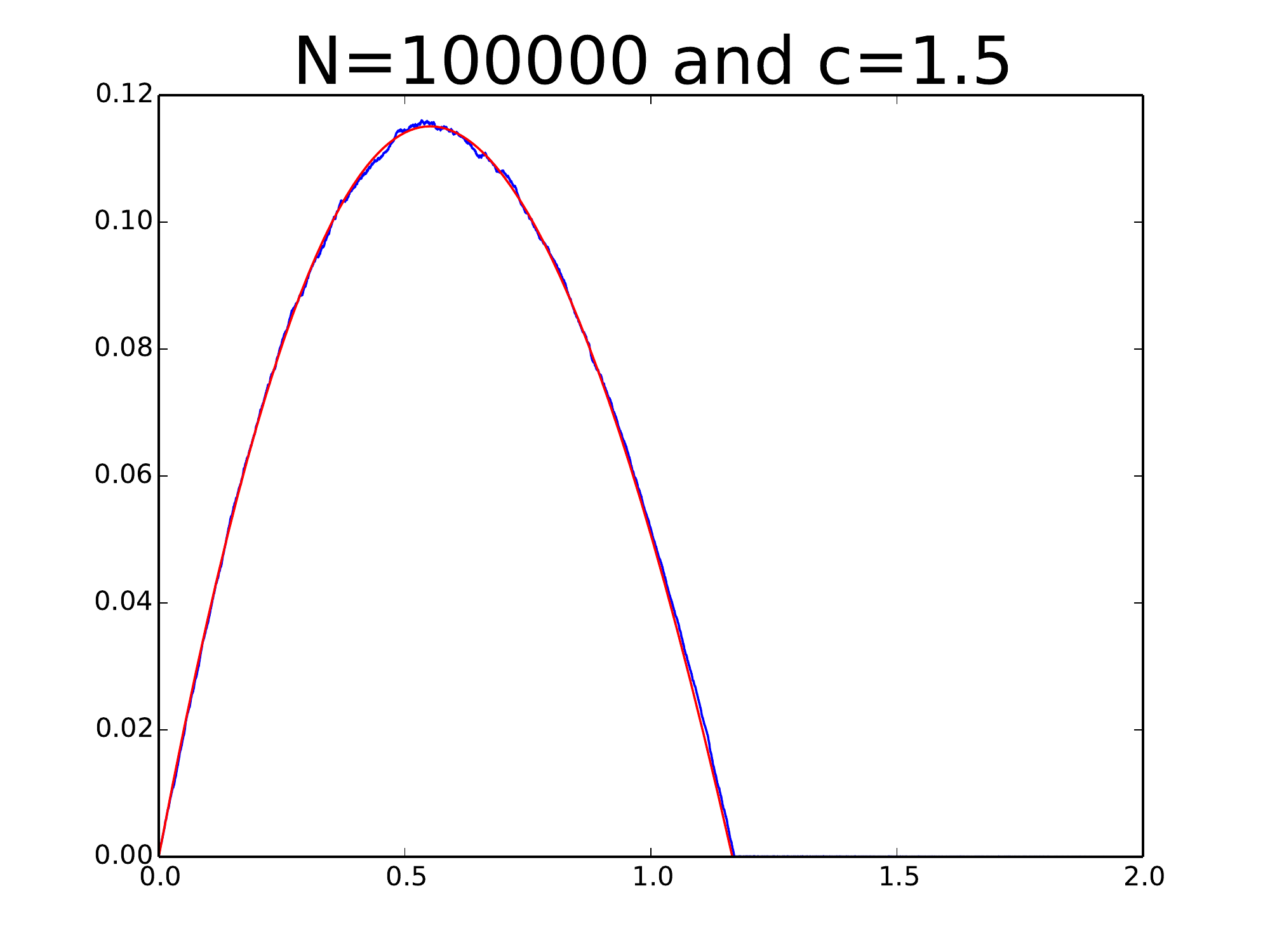}
&
\includegraphics[scale=0.24]{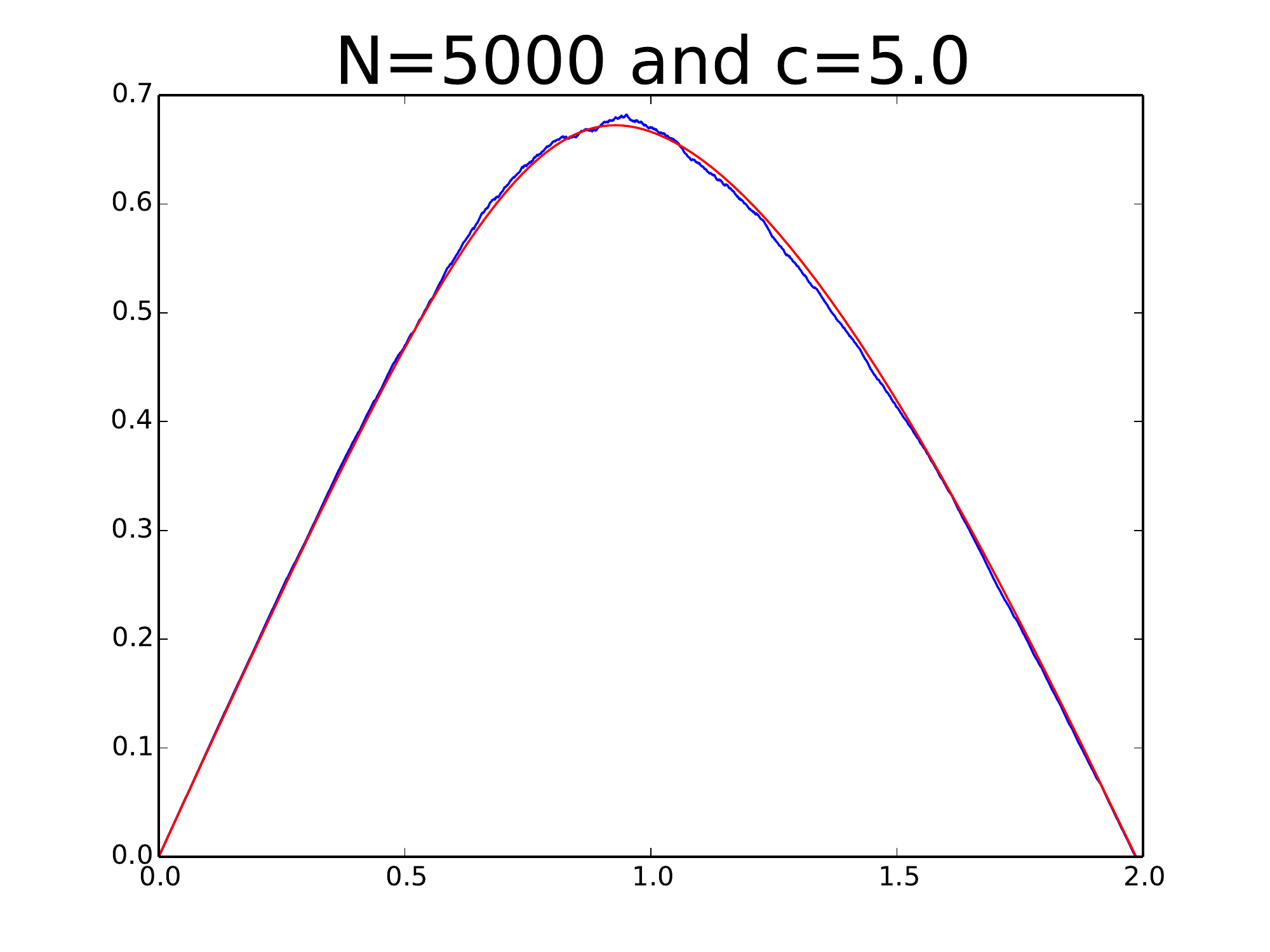}
\end{tabular}
\caption{\label{fig:limit}Simulations of $(X_{\lceil tN\rceil }/N)_{t\in [0,2]}$ (blue) and the limiting shape (red) for various values of $N$ and $c$. Notice that when $c$ is close to $1$, we have to take $N$ very large for the walk to be close to its limit.}
\end{figure}

Theorem~\ref{th:longest} is easily obtained by computing the maximal height of the curve given in Theorem~\ref{th:main}, which is equal to
\[
g(0) = \frac{1}{c}\left(\log\frac{1}{1-\rho_c} -Li_2(\rho_c) \right) = \rho_c - \frac{Li_2(\rho_c)}{c}.
\]

\section{Pseudo renewal times and strategy of the proof}

We call $\mathcal{F}_n$ the canonical filtration associated to $(a_n)$. Notice that $\mathcal{F}_n$ carries some partial information on the underlying Erd\H{o}s-R\'enyi graph but not all of it. In particular the graph structure of $S_n$ given $\mathcal{F}_n$ is that of an Erd\H{o}s-R\'enyi graph with connection probability $c/N$ since the connection between vertices of $S_n$ have not yet been tested at time $n$.

We call $\alpha_n=|A_n\cup R_n|/N$ the non-decreasing proportion of vertices explored by the process at time $n$. It is straightforward to check that $\alpha_n=\frac{X_n+n}{2N}$. Note that at time $n$, conditional on $\alpha_n$, the expected number of unexplored vertices neighboring $a_n$ is $(1-\alpha_n)c.$ Therefore it is natural to expect two successive phases:
\begin{itemize}
\item When $(1-\alpha_n)c>1$, the walker finds a lot of unexplored vertices allowing it to drift away from its starting point. We call that phase \emph{the way up}.
\item When $(1-\alpha_n)c<1$, the walker spends most of the time backtracking towards its starting point. We call that phase \emph{the way down}.
\end{itemize}

\subsection{Pseudo renewal times and the way up}\label{sec:prt}

On the way up, every time the walker visits a new vertex, there is a positive probability that this vertex belongs to the largest component of the new $S_n$. However this is not guaranteed, as the walker could be in a dead end. If this is indeed the case, the walker will soon go back to the previously visited vertex. On the other hand, if the walker is not in a dead end, it is going to spend a very long time (that is of order $N$) before returning to the current vertex, as it needs to fully explore the largest component of $S_n$. Therefore the walk $(X_n)$ contains a "spine" of macroscopic size, with small excursions.
In order to detect this spine, we introduce the following sequence of random pseudo renewal times. Let 
\begin{equation}\label{deftau}
\begin{cases}
\tau_0 &= 0, \\
\tau_{i+1} &= \inf\{n> \tau_i \, : \, X_n=i+1, \, \inf\{k \, : \, X_{n+k} = i \}>\sqrt{N}\} \wedge 2N. 
\end{cases}
\end{equation}

In words, $(\tau_i)$ is the sequence of times where the walk hits a vertex and does not come back before having visited a macroscopic portion of the graph (see Figure \ref{fig:deftaui} for an illustration). We take the minimum with $2N$ to ensure that these times are well defined even if the set $\{n> \tau_i; X_n=i+1, \, \inf\{k; X_{n+k} = i +1 \}>\sqrt{N}\}$ is empty, in which case $\tau_j = 2N$ for every $j \geq i$. However this only happens when $(1 - \alpha_n)c$ is close to $1$.

An important observation is that the $\tau_i$'s are not stopping times with respect to $\mathcal{F}_n$. However, in the large $N$ limit, they have a nice description as we will see in the following. 

\begin{figure}[ht!]
\begin{center}
\includegraphics[scale=0.9]{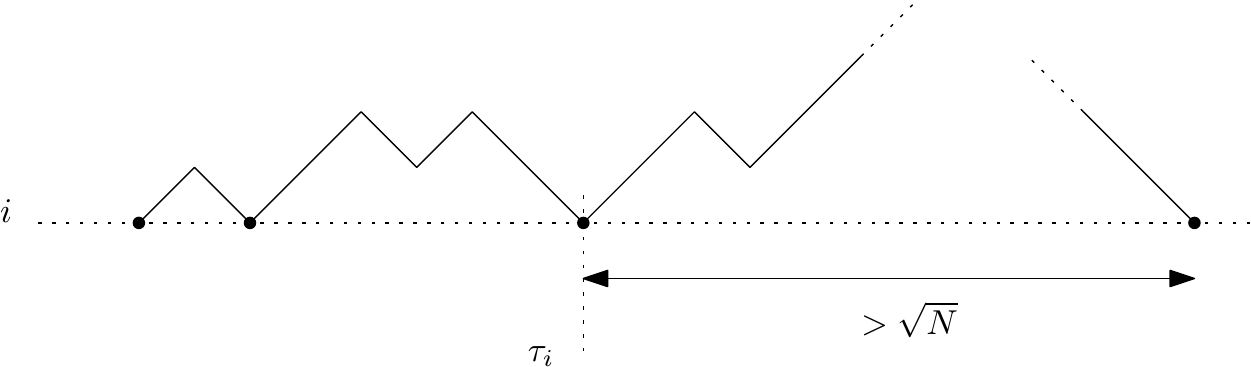}
\caption{\label{fig:deftaui}Illustration of a pseudo renewal time $\tau_i$.}
\end{center}
\end{figure}

\subsection{Strategy of the proof}
\label{sec:strat}

When hitting a pseudo renewal time $n =\tau_i$, we know that the walker is necessarily at a vertex $a_n$ belonging to the largest component of $S_{n-1}$. The neighbors of $a_n$ in $S_{n}$ are vertices picked at random, independently of the edges between vertices of $S_{n}$. Among these neighbors, some are in small components -- typically of finite size -- while at least one of them is in the largest one. Therefore, the increment $\tau_{i+1}-\tau_i$ corresponds to the time it takes to find the largest component of $S_n$. The number $C(a_n)$ of neighbors of $a_n$ in $S_n$ is close to a Poisson distribution, while the number $G(a_n)$ of tries it takes to find the largest component of $S_n$ is close to a geometric distribution, as it is a sequence of almost independent tries due to the very small amount of vertices visited between two tries. As we know that the procedure succeeds, the number of neighbors tested before finding the good one is a geometric (minus one) random variable conditioned to be smaller than a Poisson random variable. Figure \ref{fig:deltataui} gives an illustration of this situation.
\begin{figure}[ht!]
\begin{center}
\includegraphics[scale=0.9]{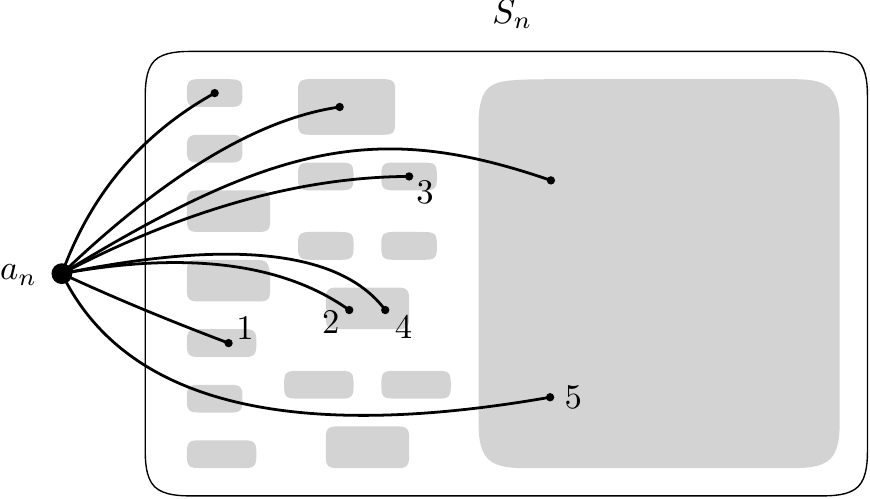}
\caption{\label{fig:deltataui}Local situation at a pseudo renewal time. Grey areas represent the connected components of $S_n$. In this example $C(a_n) = 8$ and $G(a_n) = 5$.}
\end{center}
\end{figure}

When the walker goes to a neighbor of $a_n$, it has to visit its whole connected component inside $S_n$ before returning to $a_n$. The time it takes to do so is twice the number of vertices in this connected component and will be small. Indeed, by definition, this connected component is not the giant component of the graph $S_n$ and therefore is asymptotically a subcritical Galton-Watson tree with an explicit offspring distribution.
These observations make it possible to study in detail the conditional expectation
\[
\mathbb E \left[ \tau_{i+1}-\tau_i \middle| \tau_i \right]
\]
in Section \ref{proofespcond}. A precise statement is given in Lemma \ref{espcond}.

A crucial parameter in our estimates of the above expectation is the proportion of sleeping vertices available at time $\tau_i$, that is $(1- \alpha_n)$ with our notation. In order to control this parameter, we introduce in Section~\ref{proofespcond} a sequence of random times $(h_k)$ corresponding to times where this proportion of available vertices hits fixed levels, independent of $N$.
As we already mentioned, the times $\tau_i$ are not stopping times. However, we will see in Section \ref{cht} that the $\tau_i$'s can be viewed as a Markov chain for which the $h_k$'s are stopping times. This allows us to prove a concentration result for the $h_k$'s with a martingale argument. See Lemma \ref{lemdis} for a precise statement.

The knowledge of the $h_k$'s and of the associated times $\tau_{h_k}$ provides pinning points through which the profile of the walk has to pass. Slope arguments then show that the normalized profile of the walk converges and the expectations $\mathbb E \left[ \tau_{i+1}-\tau_i \middle| \tau_i \right]$ give us access to the derivative of the increasing part of the limiting profile. The decreasing part is then deduced from the increasing one by a simple argument once one realizes that the time it takes to go back to a given level is twice the size of the giant component of the graph composed by the current sleeping vertices.
This proof of Theorem \ref{th:main} is detailed in Section \ref{pinning}.

\section{The proof itself}{\label{proof-tau}}
\subsection{Giant component among sleeping vertices}\label{sec:G}

We already mentioned in Section \ref{sec:prt} that the pseudo renewal times $\tau_i$ may degenerate. This will not be the case if, for every $n$ during the way up, the graph $S_n$ has no connected component of mesoscopic size. The next lemma shows that the probability of this event converges to $1$. For later convenience, we also include a logarithmic bound for the maximal degree in the graph.

To avoid problems at criticality, we fix a margin $\eta>0$ and consider times where $(1-\alpha_n)c>1+\eta,$ or equivalently
\begin{equation}\label{eq:alphaeta}
\alpha_n<1-(1+\eta)/c.
\end{equation}
\begin{lemma}
Let $\bf G$ be the event that, for every $n$ such that $\alpha_n$ verifies \eqref{eq:alphaeta}, the graph $S_{n}$ has no connected component of size between $N^{1/10}$ and $N^{9/10}$, and that the maximum degree of a vertex in $S_0$, hence in every $S_n$, is at most $\log N$. Then
\[
\lim_{N\to\infty}\P({\bf G})=1.
\]
\end{lemma}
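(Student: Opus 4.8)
The plan is to control the connected components of $S_n$ by comparing them, uniformly over the relevant times $n$, to components of an Erdős-Rényi graph on roughly $(1-\alpha_n)N$ vertices with connection probability $c/N$, which by the constraint \eqref{eq:alphaeta} is subcritical-to-supercritical but bounded away from criticality. First I would fix $n$ with $\alpha_n$ satisfying \eqref{eq:alphaeta}. Conditionally on $\mathcal{F}_n$, the edges inside $S_n$ have not been tested, so $S_n$ is an Erdős-Rényi graph $G(|S_n|, c/N)$ with $|S_n| \geq (1+\eta)N/c$; equivalently its effective average degree is $c|S_n|/N \geq 1+\eta$, bounded away from $1$. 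I would then invoke the standard component-size estimates for $G(m,\lambda/m)$ in this regime: for $\lambda$ bounded away from $1$ (on either side), the probability that there exists a connected component of size in a window $[k_1,k_2]$ with $k_1 \to \infty$ is exponentially small in $k_1$. Concretely, the number of vertices in components of size $\geq N^{1/10}$ but $\leq N^{9/10}$ has expectation at most $m \cdot e^{-c' N^{1/10}}$ by a union bound over the size of the component containing a fixed vertex (exploration/BFS from that vertex is dominated below and above by near-critical binomial Galton-Watson processes whose survival/extinction estimates give the exponential tail), so by Markov this probability is $o(1)$, in fact super-polynomially small.

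The next step is the union bound over $n$. Since $n$ ranges over at most $2N$ values, and for each the failure probability is $\exp(-c' N^{1/10})$, the total failure probability is at most $2N \exp(-c' N^{1/10}) \to 0$. One subtlety I would address carefully: the quantity $|S_n|$ and hence the effective density is itself random and $\mathcal{F}_n$-measurable, but on the event \eqref{eq:alphaeta} we have the deterministic lower bound $|S_n|/N \geq (1+\eta)/c$, so the component estimate applies with a uniform constant $c' = c'(c,\eta) > 0$; and the conditional law of the graph on $S_n$ given $\mathcal{F}_n$ really is $G(|S_n|, c/N)$ (this is exactly the observation recorded just before the definition of $\alpha_n$ in Section~\ref{sec:prt}), so no further conditioning issues arise. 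For the maximum-degree bound, the degree of any vertex in $S_0 = \{2,\dots,N\}$ is $\mathrm{Binomial}(N-1, c/N)$, stochastically $\mathrm{Poisson}(c)$-like, so $\P(\deg > \log N) \leq e^{-\log N \log\log N + O(\log N)}$ by a Chernoff bound, and a union bound over the $N$ vertices still gives $o(1)$. Since $S_n \subseteq S_0$ for all $n$, the degree bound propagates.

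The main obstacle I anticipate is not any single estimate but making the "uniformly over $n$" step fully rigorous: the events indexed by $n$ are highly dependent, the times at which \eqref{eq:alphaeta} holds form a random set, and $S_n$ changes with $n$. The clean way around this is to note that the event $\mathbf{G}$ fails only if there exists a fixed vertex $v$ and a fixed time $n$ such that $v$ lies in a component of $S_n$ of mesoscopic size; since each $S_n$ is a subgraph of $\mathcal{G}_N$ restricted to $S_n \subseteq V_N$, and a mesoscopic component of $S_n$ is in particular a connected vertex set of mesoscopic size inducing a connected subgraph in $\mathcal{G}_N$ on which no edge to $A_n \cup R_n$ has been revealed, one can bound everything by a single union bound over subsets of $V_N$ of size $N^{1/10}$ that are connected in $\mathcal{G}_N$ — or, more efficiently, just by the $2N$-fold union bound over $n$ using the conditional Erdős-Rényi structure, as sketched above, which is enough since the per-$n$ bound is super-polynomially small. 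I would present the second, lighter-weight argument: condition on $\mathcal{F}_n$, apply the fixed-density component estimate, and sum the $\exp(-c'N^{1/10})$ bounds over $n \leq 2N$.
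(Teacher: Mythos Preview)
Your proposal is correct and follows essentially the same route as the paper: use that, conditional on $\mathcal{F}_n$, the subgraph $S_n$ is an Erd\H{o}s--R\'enyi graph with $(1-\alpha_n)N$ vertices and parameter $c/N$, obtain a super-polynomially small bound on the probability of a mesoscopic component, and take a union bound over the at most $2N$ values of $n$. The only real difference is the mechanism for the per-$n$ bound: the paper computes directly the expected number $Z_k$ of components of size $k$ via the spanning-tree estimate $\mathbb{E}[Z_k]\le \binom{n}{k}k^{k-2}p^{k-1}(1-p)^{k(n-k)}$ and sums over $k\in[N^{1/10},N^{9/10}]$, whereas you invoke a BFS/Galton--Watson comparison for the size of the component of a fixed vertex; both yield a bound of the form $N^{O(1)}\exp(-c'N^{1/10})$ and are standard. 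Your treatment of the union over the random set of admissible $n$ and of the maximum-degree part is fine and matches what the paper does (the paper simply cites standard references for the latter).
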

\begin{proof}
The maximum degree of \ER graphs is well known (see \emph{e.g.} \cites{B,F}) and we just focus on the size of the connected components.

Recall that, by construction, for every $n$, the subgraph spanned by $S_n$ is an Erd\H{o}s-R\'enyi random graph with $(1-\alpha_{n}) N$ vertices and parameter $c/N$. 

Fix $k \geq 0$ and let $Z_k$ denote the number of connected components of size $k$ in an \ER graph of size $n$ and parameter $p$. Using the fact that a complete graph with $k$ vertices has $k^{k-2}$ spanning trees, we get:
\[
\mathbb E \left[ Z_k \right] \leq \binom{n}{k} k^{k-2} p^{k-1} (1-p)^{k(n-k)}.
\]
When $p=c/N$ and $n = (1-\alpha) N$ with $c(1+\alpha)< 1 + \eta$, using classical inequalities we obtain
\begin{align*}
\mathbb E \left[ Z_k \right] & \leq \frac{A}{\sqrt{k}} \left( \frac{(1-\alpha)N \, e}{k} \right)^k k^{k-2} \left( \frac{c}{N} \right)^{k-1} e^{-c (1-\alpha)k + c \frac{k^2}{N}} \\
& \leq \frac{A \, N}{k^{5/2}} \left( c e^{-\eta+c \frac{k}{N} } \right)^k.
\end{align*}
Now, if $k \in [N^{1/10},N^{9/10}]$ we obtain
\begin{align*}
\mathbb E \left[ Z_k \right] & \leq \frac{A \, N}{N^{5/20}} \left( c e^{-\eta+c N^{-1/10}  } \right)^k \leq A N^{3/4} \left(c e^{-\eta+c N^{-1/10}} \right)^k.
\end{align*}
If $N$ is large enough, the parameter $c$ being fixed, we have $c e^{-\eta+c N^{-1/10}} < 1$ and therefore
\begin{align*}
\mathbb E \left[ \sum_{k=N^{1/10}}^{N^{9/10}} Z_k \right]
& \leq A N^{7/4} \left( c e^{-\eta+c N^{-1/10}} \right)^{N^{1/10}}.
\end{align*}
The lemma follows from the union bound and Markov's Inequality.
\end{proof}

\subsection{The renewal increments} \label{proofespcond}

To get Theorem~\ref{th:main}, we need a good estimate of the expected difference between to consecutive pseudo renewal times. As we will see, the law of $\tau_{i+1} - \tau_i$ mainly depends on $\alpha_{\tau_i}$, therefore we introduce the random indices $(h_k)$, depending on $N$ and a fixed $\varepsilon > 0$, defined as
\begin{equation}\label{defhk}
h_k=\inf\{i:\alpha_{\tau_i}>k\varepsilon\}.
\end{equation}
These indices correspond to heights for the walk $(X_n)$ by the relation $X_{\tau_{h_k}} = h_k$. The points $(\tau_{h_k}, h_k)$ will be our \emph{pinning points} for the profile of the walk.

The $h_k$'s are well-defined during the way up, at least for times $n$ such that $(1-\alpha_n)c>1+\eta$.
This corresponds to
\begin{equation}
\label{eq:K}
k\le K := (1-(1+\eta)/c)/\varepsilon.
\end{equation}
The fact that the parameter $\alpha$ varies only slightly between two consecutive $h_k$'s means that the sequence $(\tau_{i+1}-\tau_i)_{h_k\le i \le h_{k+1}}$ is almost an i.i.d. sequence.

\begin{lemma}\label{espcond}
There exists a constant $C$ such that, if $N$ is large enough, for every integer $i \in [h_k , h_{k+1}[$ with $k\le K$, one has
$$\frac{2}{\rho_{(1- k \varepsilon)c}}-1-C\varepsilon\le \E[\tau_{i+1}-\tau_i|\tau_i]\le \frac{2}{\rho_{(1-k \varepsilon)c}}-1+C\varepsilon.$$
\end{lemma}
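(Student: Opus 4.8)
The plan is to analyze the increment $\tau_{i+1} - \tau_i$ by decomposing it according to the two phenomena described in the strategy section: the number $G := G(a_{\tau_i})$ of unsuccessful neighbor-tries before the walker finds the vertex leading into the giant component of $S_{\tau_i}$, and the sizes of the subcritical excursions explored during those unsuccessful tries. Conditionally on $\mathcal{F}_{\tau_i}$ and on the event $\mathbf{G}$, the graph spanned by the sleeping vertices $S_{\tau_i}$ is an \ER graph on $(1-\alpha_{\tau_i})N$ vertices with parameter $c/N$, so its giant component has relative density (with respect to $N$) asymptotically $(1-\alpha_{\tau_i})\rho_{(1-\alpha_{\tau_i})c}$, and each tested neighbor independently lands in that giant component with probability asymptotically $\rho_{(1-\alpha_{\tau_i})c}$. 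First I would write
\[
\tau_{i+1} - \tau_i = 1 + \sum_{j=1}^{G} \bigl(1 + 2 T_j\bigr) + (\text{negligible boundary terms}),
\]
where the leading $1$ accounts for the successful step onto the spine, each unsuccessful try costs one step down from $a_{\tau_i}$ plus $2T_j$ steps to explore and retreat from a small component of size $T_j$, and the error terms come from the at-most-$\sqrt{N}$ discrepancy in the definition \eqref{deftau} of the $\tau$'s and from the rare event $\mathbf{G}^c$.

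Next I would identify the laws. The number $C := C(a_{\tau_i})$ of sleeping neighbors of $a_{\tau_i}$ is Binomial$((1-\alpha_{\tau_i})N - O(1), c/N)$, hence close to Poisson$((1-\alpha_{\tau_i})c)$; as explained in the strategy discussion, $G$ is (a geometric-type count of) the number of neighbors inspected, in smallest-index order, before hitting one in the giant component, conditioned on the event that at least one such neighbor exists — this is the "geometric minus one conditioned to be smaller than a Poisson" description. A direct computation with these two (asymptotically) independent ingredients gives $\E[G] \to \frac{1}{\rho_{(1-\alpha_{\tau_i})c}} - 1$: indeed if $p = \rho_{(1-\alpha_{\tau_i})c}$ then, unconditionally, the expected number of failures before the first success among the Poisson$((1-\alpha)c)$ neighbors is $\tfrac{1-p}{p}(1 - e^{-(1-\alpha)cp})$ after a short calculation, and dividing by the success probability $1 - e^{-(1-\alpha)cp} = \rho_{(1-\alpha)c}$ (using \eqref{relrho} applied at parameter $(1-\alpha)c$) yields $\tfrac{1-p}{p}$, i.e. $\tfrac1p - 1$. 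For the excursion sizes, conditionally on landing outside the giant component each $T_j$ is, up to $o(1)$ corrections, the total progeny of a subcritical Galton–Watson process (Poisson offspring with mean $(1-\alpha_{\tau_i})c$, size-biased/conditioned off the giant component), which has bounded expectation; crucially, after multiplying by the success probability and dividing again, the contribution $\E[\sum_j 2T_j]$ is $O(1)$ but — and this is the point that makes the final formula clean — it combines with the $+1$'s to give exactly $2\E[G] = \tfrac{2}{p} - 2$ only once one checks that the expected cost per failed try, namely $1 + 2\E[T_j]$, together with the geometric count, reorganizes to $\tfrac{2}{\rho}-1$; I would verify this bookkeeping carefully, since the "$-1$" versus "$+1$" from the single successful step is where an off-by-one is easy to make.

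Finally I would convert the asymptotic identities into the non-asymptotic two-sided bound. Since $i \in [h_k, h_{k+1}[$, the definition \eqref{defhk} of $h_k$ forces $k\varepsilon < \alpha_{\tau_i} \le k\varepsilon + C'\varepsilon$ (the proportion $\alpha$ cannot jump by more than the maximal single-renewal increment, which is itself controlled), so $\alpha_{\tau_i} = k\varepsilon + O(\varepsilon)$; by continuity and monotonicity of $\rho \mapsto \rho_{\rho c}$ on the relevant compact range (bounded away from criticality because $k \le K$), we get $\tfrac{2}{\rho_{(1-\alpha_{\tau_i})c}} = \tfrac{2}{\rho_{(1-k\varepsilon)c}} + O(\varepsilon)$ with the implied constant uniform. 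Combining the Poisson/Binomial approximation error ($O(1/N)$), the Galton–Watson approximation error for the $T_j$'s, the $\sqrt{N}/N = o(1)$ boundary error, the contribution of $\mathbf{G}^c$ (which is $o(1)$ by the previous lemma, after bounding $\tau_{i+1}-\tau_i \le 2N$ deterministically), and the $O(\varepsilon)$ from replacing $\alpha_{\tau_i}$ by $k\varepsilon$, all of these are absorbed into a single $C\varepsilon$ once $N$ is large enough. The main obstacle is making the independence heuristics rigorous at the level of expectations: the tested neighbors of $a_{\tau_i}$ and the components they lead into are not exactly independent (shared sleeping vertices), and $\tau_i$ is not a stopping time, so I expect the delicate step is to show that conditioning on $\mathcal{F}_{\tau_i}$ (or on a slightly enlarged sigma-field that does respect the renewal structure) leaves the graph on $S_{\tau_i}$ genuinely \ER-distributed up to the small event $\mathbf{G}^c$, and that the truncation at $\sqrt{N}$ in \eqref{deftau} does not bias $G$ — this last point requires checking that a "true" spine vertex almost never returns within $\sqrt{N}$ steps while a dead-end vertex almost always does, which is exactly the subcritical-versus-supercritical dichotomy for $S_{\tau_i}$.
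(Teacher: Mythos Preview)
Your overall strategy matches the paper's: decompose $\tau_{i+1}-\tau_i$ into the one successful step plus the round-trip explorations of the small components visited before it, approximate $C(a_n)$ by a Poisson, treat $G(a_n)$ as a geometric truncated by that Poisson, and model each small-component size as the total progeny of a subcritical Galton--Watson tree. Your remarks about the non-stopping-time issue, the $\sqrt{N}$ truncation, and the $O(\varepsilon)$ replacement of $\alpha_{\tau_i}$ by $k\varepsilon$ are all on target and handled the same way in the paper.

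There is, however, a genuine computational gap at the heart of your argument. First, the decomposition: exploring a small component with $W_j$ vertices (including the neighbor $x_j$ itself) costs exactly $2W_j$ steps --- one down to $x_j$, $2(W_j-1)$ around the subtree, one back up --- so the correct identity on $\mathbf G$ is
\[
\tau_{i+1}-\tau_i \;=\; 1 + 2\sum_{j=1}^{G} W_j,
\]
not $1 + \sum_j(1+2T_j)$. Second, and this is the substantive error, your claimed value $\E[G] = (1-p)/p$ is wrong. That would be the mean of an \emph{unconstrained} geometric; here $G$ is conditioned to be strictly less than a Poisson$(\lambda)$ variable with $\lambda=(1-\alpha)c$, and a direct computation (sum $k(1-p)^k\,\P(C>k)$ and divide by $\P(G<C)=1-e^{-\lambda p}=p$) gives
\[
\E[G \mid G<C] \;=\; \frac{(1-p)\bigl(1-\lambda(1-p)\bigr)}{p}.
\]
The extra factor $1-\lambda(1-p)$ is not a nuisance term: it is exactly the reciprocal of the expected small-component size $\E[W_j]=\bigl(1-\lambda(1-p)\bigr)^{-1}$. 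The clean formula $\tfrac{2}{\rho}-1$ emerges only through the cancellation $\E[G]\cdot\E[W]=(1-p)/p$, which you never perform and cannot perform with your stated value of $\E[G]$ and your unspecified $\E[T_j]$. The paper carries this cancellation out explicitly, and it is the crux of the lemma; your hand-wave that the pieces ``reorganize to $\tfrac{2}{\rho}-1$'' hides precisely the step where the argument would otherwise fail.
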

\begin{proof}
To be able to bound the conditional expectation of $\tau_{i+1} - \tau_i$, we need to introduce the fundamental decomposition of the trajectory of $(X_n)$ during this interval, leading to identity~\eqref{IMPORTANT} below.
At time $n$, the walker is at a vertex $a_n$ having $C(a_n)$ neighbors inside $S_n$ (see Figure \ref{fig:deltataui}). The law of $C(a_n)$ is complicated unless the time $n$ is the first visit of $a_n$. Indeed, for for such a time $n$, the algorithm has never tested the connection between vertices of $S_n$ and $a_n$, meaning that the integer $C(a_n)$ is just a binomial random variable with parameters $(1-\alpha_n)N$ and $c/N$. We denote by ${\bf F}_n$ the event that $n$ is the first visit to $a_n$. In addition, notice that, on the event ${\bf F}_n$, the number $C(a_n)$ and the neighbors $x_1 < \cdots < x_{C(a_{n})}$ of $a_n$ in $S_n$ are independent of the connections inside $S_n$.

For every $n$, call ${\bf H}_n$ the event that the return time to $a_{n-1}$ is at least $\sqrt{N}$. On $\mathbf F_n$, this is equivalent to the fact that the connected component of $a_n$ in $S_{n-1}$ has at least $\sqrt{N}/2$ vertices meaning that $\{n=\tau_i\}=\{X_n=i\}\cap{\bf F}_n\cap{\bf H}_n$.

We denote by $G(a_n)$ the smallest $k$ such that the connected component of $x_{k+1}$ in $S_n$ has size larger than $\sqrt N/2$, and $G(a_n) = C(a_n)$ if none of the $x_i$'s is in such a connected component. 
For $1\le i <G(a_n)$, we call $W_i$ the number of vertices in the connected component of $x_i$ in $S_n$. We fix however $W_i=0$ if $x_i$ belongs to the connected component of a previously explored neighbor, meaning $x_i$ will be retired before the algorithm has the chance to test the connection between $a_n$ and $x_i$ (see for example the vertices number $2$ and $4$ in Figure~\ref{fig:deltataui}).

On the event $\{X_n=i\}\cap{\bf F}_n \cap \mathbf G$, the event $\mathbf H_n$ is equivalent to the fact that the connected component of $a_n$ in $S_{n-1}$ contains at least $N^{9/10}$ vertices. Using the bound on the maximal degree in the graph given by $\mathbf G$, this is also equivalent to the fact that at least one of the neighbors of $a_n$ in $S_{n-1}$ has a connected component in $S_n$ of size at least $N^{9/10}/\log{N}$, or $\sqrt{N}/2$. Therefore, on the event $\{X_n=i\}\cap{\bf F}_n \cap \mathbf G$, the event $\mathbf H_n$ is equivalent to $G(a_n)<C(a_n)$ and 
\begin{equation}\label{IMPORTANT}
\tau_{i+1}-\tau_i=1+2\sum_{j=1}^{G(a_n)} W_j.
\end{equation}
Conditional on $\tau_i$, the distribution of $(G(a_n),(W_j)_{ 1\le j\le G(a_n)})$ is explicit and only depends on $\alpha_{\tau_i}=\frac{i + \tau_i}{2N}$. Therefore we have shown that, on the event ${\bf G}$, the sequence $(\tau_i)$ is coupled with a non-homogeneous Markov chain, and the $h_k$'s are stopping times for this Markov chain. 

\bigskip

We can now turn to the actual proof of the lemma. We assume that $\varepsilon$ is small enough and that $N$ is large enough. In all our computations, $C$ denotes a constant independent on $k$, $N$ and $\varepsilon$ which can change from line to line to keep computations easier to read.

Recall $h_k\le i < h_{k+1}$, meaning that
\[k\varepsilon:=\alpha_- \leq \alpha_{\tau_i} <  \alpha_{\tau_{i+1}}<\alpha_+:=(k+1)\varepsilon + \varepsilon^2.
\]
Indeed, on the event ${\bf G}$, the difference $\tau_{i+1}-\tau_i$ is at most $N^{1/10}\log N$. Therefore, if $N$ is large enough, we can make sure that the variation in $\alpha$ between two subsequent $\tau_i$'s stays arbitrarily small.

Dropping the dependency in $N$, we call $p_\alpha$ the probability that a randomly taken vertex in an \ER graph with $(1-\alpha)N$ vertices and parameter $c/N$  belongs to a connected component of size larger than $\sqrt N/2$. By Dini's theorem, the sequence $p_{\alpha}$ converges uniformly to $\rho((1-\alpha)c)$ as $N$ goes to infinity. We want to compute 
\begin{equation}\label{condexp}
\E\left[\sum_{j=1}^{G(a_n)} W_j \middle| G(a_n)<C(a_n)\right]=\frac{{\displaystyle \sum_{k=0}^\infty \E\left[\sum_{j=1}^{k} W_j \, {\bf 1}_{G(a_n)=k}{\bf 1}_{C(a_n)>k}\right]}}{{\displaystyle \P \left( G(a_n)<C(a_n) \right)}}.
\end{equation}
For a fixed $k$,
$$\E\left[\sum_{j=1}^{k} W_j \, {\bf 1}_{G(a_n)=k} \, {\bf 1}_{C(a_n)>k}\right]=\E\left[\sum_{j=1}^{k} W_j \, \P\left(G(a_n)=k \text{ and } C(a_n)>k \, \middle| \, \sum_{j=1}^{k} W_j\right)\right].$$

Conditional on $(W_l)_{l\le k}$, if $\bigcap_{l\le k}\{W_l<\sqrt{N}\},$ the event $\{G(a_n)=k\}$ means that $x_{k+1}$ belongs to a large component of $S_n$. This is also true after removing the components of $x_1,\dots, x_k$ to get rid of dependencies. Besides, $\{C(a_n)>k\}$ means that $a_n$ has at least $k+1$ children. By independence between the neighbors of $a_n$ and the connections inside $S_n$
$$
\E\left[\sum_{j=1}^{k} W_j \, \P\left(G(a_n)=k \text{ and } C(a_n)>k \, \middle| \, \sum_{j=1}^{k} W_j \right) \right]\le\E\left[\sum_{j=1}^{k} W_j \, {\bf1}_{\bigcap_{l\le k}\{W_l<\sqrt{N}\}}\right]\P(C(a_n)>k) \,p_{\alpha_-}
$$
and
$$\E\left[\sum_{j=1}^{k} W_j \, \P\left(G(a_n)=k \text{ and } C(a_n)>k \, \middle| \, \sum_{j=1}^{k} W_j\right)\right]\ge\E\left[\sum_{j=1}^{k} W_j {\bf1}_{\bigcap_{l\le k}\{W_l<\sqrt{N}\}}\right]\P(C(a_n)>k)\, p_{\alpha_+}.$$

We turn to the expectation in the last bounds.
\begin{multline}
\label{eq:geom}
\E \left[\sum_{j=1}^{k} W_j \, {\bf1}_{\bigcap_{l\le k}\{W_l<\sqrt{N}\}}\right]=\sum_{j=1}^{k}\P\left(\bigcap_{l\le j-1}\{W_l<\sqrt{N}\}\right) \times\\
\E\left[W_j \, {\bf 1}_{W_j<\sqrt{N}} \, \middle| \, \bigcap_{l\le j-1}\{W_l<\sqrt{N}\}\right]\P\left(\bigcap_{j+1\le l\le k}\{W_l<\sqrt{N}\}\, \middle| \, \bigcap_{l\le j}\{W_l<\sqrt{N}\}\right).
\end{multline}
Using once again the fact that the local value of $\alpha$ remains between $\alpha_-$ and $\alpha_+$ with high probability,
\[
\E\left[\sum_{j=1}^{k} W_j \, {\bf1}_{\bigcap_{l\le k}\{W_l<\sqrt{N}\}}\right] \geq
(1-p_{\alpha_-})^k \, \E\left[W_j \, \middle| \, \bigcap_{l\le j}\{W_l<\sqrt{N}\}\right]\phantom{.}
\]
and
\[
\E\left[\sum_{j=1}^{k} W_j \, {\bf1}_{\bigcap_{l\le k}\{W_l<\sqrt{N}\}}\right] \leq
(1-p_{\alpha_+})^k \, \E\left[W_j\, \middle|\, \bigcap_{l\le j}\{W_l<\sqrt{N}\}\right].
\]

Conditional on $\bigcap_{l\le j}\{W_l<\sqrt{N}\}$, the random variable $W_j$ is either the size of a small component in an Erd\H{o}s-R\'enyi random graph with parameter $c$ and a number of vertices between $(1-\alpha_+)N$ and $(1-\alpha_-)N$, or zero if $x_j$ belongs to one of the previously visited components, which has probability smaller than $\varepsilon$ for $N$ large enough. The expected size of a small component in an Erd\H{o}s-R\'enyi random graph with parameter $c$ and $(1-\alpha)N$ vertices converges, for a fixed $\alpha$, to the expected size of a Galton-Watson tree with Poisson$((1-\alpha)c)$ offspring distribution conditioned on extinction. This, in turn, is a subcritical Galton-Watson tree with Poisson$((1-\rho_{(1-\alpha)c})(1-\alpha)c)$ offspring distribution having expected size
$$\frac{1}{1-(1-\rho_{(1-\alpha)c})(1-\alpha)c}.$$
Using the smoothness of $\rho_x$ as a function of $x$, for $N$ large enough and for every $\alpha\in[\alpha_-,\alpha_+],$ the expected size of a small component is thus in the interval 
\[
\left[
\frac{1}{1-(1-\rho_{(1-\alpha_-)c})(1-\alpha_-)c}-C\varepsilon \, ; \, 
\frac{1}{1-(1-\rho_{(1-\alpha_-)c})(1-\alpha_-)c}+C\varepsilon
\right].
\]

Equation~\eqref{condexp} then gives
\begin{multline}\label{upperbound}\E\left[\sum_{j=1}^{G(a_n)} W_j\, \middle| \, G(a_n)<C(a_n)\right]\\ \le\sum_{k=0}^{\infty}k \, p_{\alpha_-}(1-p_{\alpha_+})^k\left(\frac{1}{1-(1-\rho_{(1-\alpha_-)c})(1-\alpha_-)c}+C\varepsilon\right)\frac{\P(C(a_n)>k)}{\P(G(a_n)<C(a_n))}\end{multline}
and
\begin{multline*}\E\left[\sum_{j=1}^{G(a_n)} W_j \, \middle| \, G(a_n)<C(a_n)\right]\\ \ge\sum_{k=0}^{\infty}k \, p_{\alpha_+}(1-p_{\alpha_-})^k\left(\frac{1}{1-(1-\rho_{(1-\alpha_-)c})(1-\alpha_-)c}-C\varepsilon\right)\frac{\P(C(a_n)>k)}{\P(G(a_n)<C(a_n))}.\end{multline*}

As both bounds will be treated similarly, we will focus on the upper bound \eqref{upperbound}. By a coupling argument we can always assume that for $N$ large enough, with high probability, the random variable $C(a_n)$ is larger than a Poisson$(c(1-\alpha_-))$ random variable, denoted by X in the following. 
We call
$$S_n:=\sum_{k=0}^{n}k(1-p_{\alpha_+})^{k-1}=\frac{1-(1-p_{\alpha_+})^{n+1}}{p_{\alpha_+}^2}-\frac{(n+1)(1-p_{\alpha_+})^{n}}{p_{\alpha_+}}.$$
Isolating the sum in \eqref{upperbound}, we compute
\begin{align}
\sum_{k=0}^\infty k \, (1-p_{\alpha_+})^k \, \P(C(a_n)>k)&\le (1-p_{\alpha_+}) \, \sum_{k=1}^\infty (S_{k}-S_{k-1}) \, P(X\ge  k+1)+C\varepsilon \notag\\
&=(1-p_{\alpha_+})\sum_{k=0}^\infty S_{k} \, P(X=k+1)+C\varepsilon \notag\\
&=(1-p_{\alpha_+}) \,\E(S_{X-1})+C\varepsilon, \label{eq:espSp}
\end{align}
with the convention $S_{-1}=0$. It is straightforward to check
\begin{equation}
\label{eq:SX}
\E(S_{X-1})=\frac{1}{p_{\alpha_+}^2}\left[1-\exp(-(1-\alpha_-)cp_{\alpha_+})-p_{\alpha_+}(1-\alpha_-)c\exp((-(1-\alpha_-)cp_{\alpha_+})\right].
\end{equation}
For any $\alpha_-$, the function on the right hand side of \eqref{eq:SX} is infinitely differentiable and therefore Lipschiz in $p_{\alpha_+}$. In addition, the Lipschitz coefficient of this function can be computed explicitely and bounded uniformly in $\alpha_-$.

By uniform convergence  $|p_{\alpha_+}-\rho_{(1-\alpha_-)c}|\le \varepsilon$ and $|p_{\alpha_-}-\rho_{(1-\alpha_-)c}|\le \varepsilon$ if $N$ is large enough. Therefore we can replace $p_{\alpha_+}$ by $\rho_{(1-\alpha_-)c}$ in the previous computation, with only a error of order $C\varepsilon.$
Recalling relation \eqref{relrho} characterizing $\rho$, we obtain 
\begin{align}
\E(S_{X-1})&\le \frac{1}{\rho_{(1-\alpha_-)c}^2}\left[1-(1-\rho_{(1-\alpha_-)c})-\rho_{(1-\alpha_-)c}(1-\alpha_-)c(1-\rho_{(1-\alpha_-)c})\ \right]+C\varepsilon \notag\\
&\le \frac{1}{\rho_{(1-\alpha_-)c}}\left[1-(1-\alpha_-)c(1-\rho_{(1-\alpha_-)c})\right] + C \varepsilon. \label{eq:espSX}
\end{align}

We turn to the factor $\P(G(a_n)<C(a_n))$. Denote by $Y$ a Poisson$(c(1-\alpha_+))$ random variable. Using once again a coupling argument as well as the same decomposition as when dealing with the first member of \eqref{eq:geom} we get
\begin{align*}
\P(G(a_n)<C(a_n))&\ge\P(G(a_n)<Y)-C\varepsilon\ge 1- \E[(1-p_{\alpha_-})^Y]-C\varepsilon\\
&\ge 1-\exp{(c(1-\alpha_+)p_{\alpha_-})}-C\varepsilon \ge \rho_{(1-\alpha_-)c}-C\varepsilon.
\end{align*}
Hence
\begin{equation}\label{denom}
\frac{1}{\P(G(a_n)<C(a_n))}\le \frac{1}{\rho_{(1-\alpha_-)c}}+C\varepsilon.
\end{equation}
Putting equations \eqref{upperbound}, \eqref{eq:espSp}, \eqref{eq:espSX} and \eqref{denom} together
\begin{align*}
&\E\left[\sum_{j=1}^{G(a_n)} W_j \, \middle| \, G(a_n)<C(a_n)\right]\\
&\le \frac{(1-p_{\alpha_+})}{\rho_{(1-\alpha_-)c}}\left(1-(1-\alpha_-)c(1-\rho_{(1-\alpha_-)c})\right)\left(\frac{1}{1-(1-\rho_{(1-\alpha_-)c})(1-\alpha_-)c}\right)+C\varepsilon\\
&\le \frac{1-\rho_{(1-\alpha_-)c}}{\rho_{(1-\alpha_-)c}}+C\varepsilon.\end{align*}
Recalling 
$$\tau_{i+1}-\tau_i=1+2\sum_{j=1}^{G(a_n)} W_j,$$
we get the desired result.
\end{proof}

\subsection{Concentration for the pinning heights}\label{cht}

The sharp estimate of the length of the renewal intervals obtained in Lemma \ref{espcond} converts into concentration for the pinning heights $(h_k)_{1 \leq k \leq K}$ defined by \eqref{defhk}:
\begin{lemma}\label{lemdis}
There exists a constant $C$, depending only on $\eta$, such that for every $k \leq K$, with high probability,
$$\varepsilon \rho_{(1-k\varepsilon)c}-C\varepsilon^2 \le  \liminf_{N \to \infty} \frac{h_{k+1}-h_k}{N}\le \limsup_{N \to \infty} \frac{h_{k+1}-h_k}{N}\le \varepsilon \rho_{(1-k\varepsilon)c}+  C\varepsilon^2.$$
\end{lemma}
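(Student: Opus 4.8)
The plan is to combine an exact accounting identity with the sharp first‑moment estimate of Lemma~\ref{espcond}, the latter upgraded to a pathwise statement via a martingale concentration. Fix $k\le K$ and write $\rho:=\rho_{(1-k\varepsilon)c}$ and $m:=h_{k+1}-h_k$. Everything below takes place on the event $\mathbf G$ and on the event, of probability $1-o(1)$ for $k\le K$, that $h_k$ and $h_{k+1}$ are finite, i.e. that the way up reaches the $\alpha$‑levels $k\varepsilon$ and $(k+1)\varepsilon$. First I would record the deterministic identity: since $X_{\tau_{h_j}}=h_j$ and $\alpha_n=(X_n+n)/(2N)$, we have $h_j+\tau_{h_j}=2N\,\alpha_{\tau_{h_j}}$ for $j=k,k+1$, whence
\begin{equation*}
m+\big(\tau_{h_{k+1}}-\tau_{h_k}\big)=2N\big(\alpha_{\tau_{h_{k+1}}}-\alpha_{\tau_{h_k}}\big).
\end{equation*}
On $\mathbf G$ every renewal increment $\tau_{i+1}-\tau_i$ is at most $N^{1/10}\log N$, so $\alpha$ increases by at most $N^{-9/10}\log N$ at each renewal index; combined with $\alpha_{\tau_{h_j}}\ge j\varepsilon>\alpha_{\tau_{h_j-1}}$ this gives $\alpha_{\tau_{h_j}}=j\varepsilon+o(1)$ and hence $\alpha_{\tau_{h_{k+1}}}-\alpha_{\tau_{h_k}}=\varepsilon+o(1)$. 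Since each renewal index raises $\alpha$ by at least $1/N$ and $\alpha_{\tau_i}\le1$, we also get the a priori bounds $m\le N\varepsilon(1+o(1))$ and $h_{k+1}\le N$.

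Next comes the probabilistic part. By the discussion in the proof of Lemma~\ref{espcond}, on $\mathbf G$ the sequence $(\tau_i)$ is coupled with a non‑homogeneous Markov chain whose one‑step law depends only on $\alpha_{\tau_i}$, for which the $h_k$ are stopping times, and whose one‑step conditional expectation is exactly $\E[\tau_{i+1}-\tau_i\mid\tau_i]$. The process
\begin{equation*}
M_i:=\big(\tau_{i\wedge h_{k+1}}-\tau_{i\wedge h_k}\big)-\sum_{j=h_k}^{(i\wedge h_{k+1})-1}\E\big[\tau_{j+1}-\tau_j\mid\tau_j\big]
\end{equation*}
is then a martingale with at most $h_{k+1}\le N$ nonzero increments, each bounded by $2N^{1/10}\log N$ on $\mathbf G$, so the Azuma--Hoeffding inequality gives $|M_{h_{k+1}}|\le N^{2/3}$ with probability $1-o(1)$. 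On that event, bounding each of the $m$ compensator terms between $\tfrac2\rho-1-C\varepsilon$ and $\tfrac2\rho-1+C\varepsilon$ via Lemma~\ref{espcond},
\begin{equation*}
\tau_{h_{k+1}}-\tau_{h_k}=m\Big(\tfrac2\rho-1\Big)+O(\varepsilon m)+O(N^{2/3})=m\Big(\tfrac2\rho-1\Big)+O(\varepsilon^2N)+o(N),
\end{equation*}
using $m\le N\varepsilon(1+o(1))$ in the last step.

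Combining the two parts and substituting into the accounting identity,
\begin{equation*}
\tfrac{2m}{\rho}=m+m\Big(\tfrac2\rho-1\Big)=2N\varepsilon+O(\varepsilon^2N)+o(N),
\end{equation*}
so $m=N\varepsilon\rho+O(\varepsilon^2N)+o(N)$ since $\rho\le1$. Dividing by $N$ and letting $N\to\infty$ yields, with probability tending to one, $\varepsilon\rho-C\varepsilon^2\le\liminf_N m/N\le\limsup_N m/N\le\varepsilon\rho+C\varepsilon^2$, and a union bound over the finitely many $k\le K$ makes all these estimates hold simultaneously.

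The hard part will be the second step: justifying that replacing the non‑adapted times $\tau_i$ by the Markov‑chain coupling (valid only on $\mathbf G$) does not disturb the stopping‑time property of the $h_k$, that $\E[\tau_{i+1}-\tau_i\mid\tau_i]$ really is the compensator so that Lemma~\ref{espcond} can be applied term by term along the stopped chain, and that $h_k,h_{k+1}$ are finite with high probability for $k\le K$ so that $M$ is defined up to $h_{k+1}$. The remainder is routine tracking of the $o(N)$ and $O(\varepsilon^2N)$ error terms.
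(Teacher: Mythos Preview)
Your proposal is correct and follows essentially the same approach as the paper: couple $(\tau_i)$ with a Markov chain on $\mathbf G$, use that the $h_k$ are stopping times for it, form the compensated martingale of the renewal increments, and apply Azuma--Hoeffding together with Lemma~\ref{espcond}. The only notable difference is organizational: the paper extends the sequence past $h_{k+1}$ to an auxiliary $\tilde\tau$ (by appending i.i.d.\ copies of the first increment) so that the martingale is defined for all $n\le\varepsilon N$, and then reads off $h_{k+1}-h_k$ as the hitting time of $2\varepsilon N$ by $\tilde\tau_{h_k+n}-\tilde\tau_{h_k}+n$; you instead stop the martingale at $h_{k+1}$ and close the argument via the accounting identity $m+(\tau_{h_{k+1}}-\tau_{h_k})=2N\varepsilon+o(N)$. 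The paper's extension trick has the advantage that it yields finiteness of $h_{k+1}-h_k$ as a byproduct of the concentration (so the issue you flag as ``the hard part'' disappears), whereas your version is slightly cleaner algebraically but requires the a priori input that $h_{k+1}$ is finite with high probability.
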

\begin{proof}
Fix $k\le K$, we are going to construct a martingale involving the sequence $(\tau_i)_{h_k\le i < h_{k+1}}$. 
Recall that on $\mathbf{G},$ the sequence $(\tau_i)$ is a Markov chain, and that $h_k$ is a stopping time for it. Indeed, as we saw earlier, $\tau_{i+1}-\tau_i$ has an explicit distribution, depending only on $\tau_i+i$.

We modify slightly the sequence $\tau_{h_k+i}$ in the following way. Let $\tilde{\tau}_{h_k+i}$ be equal to $\tau_{h_k+i}$ as long as $\tau_{h_k+i}-\tau_{h_k}+i \le 2\varepsilon N$. Then complete the sequence by adding to the last term $\tau_{h_{k+1}}$ i.i.d. copies of $\tau_{h_{k}+1}-\tau_{h_k}$ at each step. This is just a formal definition, and we are only interested in $h_{k+1}-h_k$, which is precisely, by definition, the hitting time of $2\varepsilon N$ by the sequence $\tau_{h_k+i}-\tau_{h_k}+i$. Obviously changing the sequence after this hitting time won't modify it.

Now we introduce the martingale $M^{(k)}_n$ with respect to $\sigma(\tilde{\tau}_{h_k+i})_{i\ge 0}$ defined on $\mathbf{G}$ by
\begin{equation*}
\begin{cases}
M^{(k)}_0=0, \\
M^{(k)}_{n}=\tilde{\tau}_{h_{k}+n}-\tilde{\tau}_{h_k}- \sum_{i=0}^{n-1} \E[\tilde{\tau}_{h_k+i+1}-\tilde{\tau}_{h_k+i}|\tilde{\tau}_{h_k+i}].
\end{cases}
\end{equation*}
Recall that, still on the event $\mathbf{G}$, the difference $|\tilde{\tau}_{h_k+i+1}-\tilde \tau_{h_k+i}|$ is smaller then $N^{1/10}\log N$, while by construction and Lemma \ref{espcond}, for every $i \geq 0$,
$$\frac{2}{\rho_{(1-\alpha_-)c}}-1-C\varepsilon\le \E[\tilde\tau_{h_k+i+1}-\tilde \tau_{h_k+i}|\tilde \tau_{h_k+i}]\le \frac{2}{\rho_{(1-\alpha_-)c}}-1+C\varepsilon.$$
Therefore, for $N$ large enough, the increments of $M^{(k)}$ are bounded by $N^{1/10}\log N$.

Azuma-Hoeffding inequality gives that
\[
\P \left( M^{(k)}_{n} >N^{3/4} \right)\le 2 \exp \left(-2 \frac{(N^{3/4})^2}{N (N^{1/10} \log N)^2} \right) \leq C \exp(-N^{1/4}),
\]
therefore, by the union bound,
\[
\P \left( \sup_{n\le \varepsilon N} M^{(k)}_{n}>N^{3/4} \right)\le \varepsilon \, C N \exp(-N^{1/4})
\]
and, since $K \leq C/\varepsilon$, using once again the union bound
\[
\P \left( \sup_{k\le K}\sup_{n\le \varepsilon N} M^{(k)}_{n}>N^{3/4} \right) \le CN \exp(-N^{1/4}),
\]
which can be made as small as requested by taking $N$ large.

This implies that,  as $N\to \infty$, with high probability
\[
\left|\tilde{\tau}_{h_{k}+n}-\tilde{\tau}_{h_k}- \sum_{i=0}^{n-1} \E[\tilde{\tau}_{h_k+i+1}-\tilde{\tau}_{h_k+i}|\tilde{\tau}_{h_k+i}]\right|<N^{3/4}, 
\]
whence, for all $n\le \varepsilon N$,
$$n \left(\frac{2}{\rho_{(1-\alpha_-)c}}-C\varepsilon\right)\le\tilde{\tau}_{h_{k}+n}-\tilde{\tau}_{h_k}+n\le n \left(\frac{2}{\rho_{(1-\alpha_-)c}}+C\varepsilon\right).$$
Recalling that $h_{k+1} - h_k$ is the hitting time of $2 \varepsilon N$ by the sequence $(\tilde{\tau}_{h_{k}+n}-\tilde{\tau}_{h_k}+n)_n$, we get the result.
\end{proof}

\subsection{Proof of Theorem~\ref{th:main}}\label{pinning}
As we are now going to manipulate $\varepsilon$, we keep track of the dependency of the $h_k$'s on $\varepsilon$ by writing $h^\varepsilon_{k}$.
As a consequence of Lemma \ref{lemdis}, for every $k\le K=(1-(1+\eta)/c)/\varepsilon$ 
$$\sum_{i=0}^{k-1}\varepsilon\rho_{(1-i\varepsilon)c}-kC\varepsilon^2\le \liminf_{N\to\infty} \frac{h^{\varepsilon}_k}{N} \le \limsup_{N\to\infty}\frac{h^{\varepsilon}_k}{N} \le\sum_{i=0}^{k-1} \varepsilon\rho_{(1-i\varepsilon)c}+kC\varepsilon^2.$$
Taking $k=\lceil u/\varepsilon\rceil$, we identify a Riemann sum, so that by derivability of the integrated function $x \mapsto \rho_x$, uniformly in $u \in [0,1-(1+ \eta)/c]$
\begin{equation}
\label{eq:hRiemann}
\int_0^u \rho_{(1-x)c}dx-C\varepsilon \le \liminf_{N\to\infty} \frac{h_{\lceil u/\varepsilon\rceil}}{N}\le\limsup_{N\to\infty} \frac{h_{\lceil u/\varepsilon\rceil}}{N}\le\int_0^u \rho_{(1-x)c}dx+C\varepsilon.
\end{equation}

The $K$ points of the normalized profile $(n/N,X_n/N)$ of the walk taken at the times $\tau_{h_k^\varepsilon}$ for $k \in \{1,\ldots,K\}$, can be written
\begin{align}
\label{eq:pinned}
\left( \frac{\tau_{h_k^\varepsilon}}{N} , \frac{X_{\tau_{h_k^\varepsilon}}}{N} \right) & = \left( \frac{\tau_{h_k^\varepsilon}}{N} , \frac{h_k^\varepsilon}{N} \right) = \left( 2 \, k \, \varepsilon + \mathcal{O}\left( N^{-4/5} \right)- \frac{h_k^\varepsilon}{N}, \frac{h_k^\varepsilon}{N} \right);
\end{align}
the last equality coming from the fact that, on the event $\mathbf G$, each increment $\tau_{i+1} -  \tau_i$ is bounded from above by $N^{1/10} \, \log N$.

Gathering \eqref{eq:hRiemann} and \eqref{eq:pinned}, we obtain that as $N \to \infty$, these $K$ points of the normalized profile of the walk are uniformly at distance $ C \varepsilon$ of the following parametrized curve:
\begin{equation}
\label{eq:profileup}
\begin{cases}
x(u) = 2u -\int_0^u \rho_{(1-x)c} \, \mathrm{d}x,\\
y(u) = \int_0^u \rho_{(1-x)c} \, \mathrm{d}x.
\end{cases}
\end{equation}
Finally, recalling that
\[
\left(\frac{\tau_{h_{k+1}^\varepsilon}}{N} - \frac{\tau_{h_k^\varepsilon}}{N} \right) + \left( \frac{X_{\tau_{h_{k+1}^\varepsilon}}}{N} - \frac{X_{\tau_{h_k^\varepsilon}}}{N} \right) = 2 \varepsilon
\]
and that the slope of the renormalized profile is smaller than $1$ in absolute value, we are assured that the whole normalized profile stays at distance smaller than $C \varepsilon$ from the curve defined by \eqref{eq:profileup}. Taking $\varepsilon \to 0$ first and then $\eta \to 0$, we have the convergence of the normalized profile of the walk for the parameter $u$ ranging from $0$ to $1-1/c$.

To identify the parametrized curve defined by \eqref{eq:profileup} with the explicit one given in Theorem \ref{th:main}, we just have to parametrize the curve by  $\rho_{(1-u)c}$ instead of $u$. The definition \eqref{relrho} of $\rho_{(1-u)c}$ gives
\[
u = 1 + \frac{\log \left( 1- \rho_{(1-u)c} \right)}{c \, \rho_{(1-u)c}}.
\]
From this relation, we can proceed to a change of variable in the integral appearing in \eqref{eq:profileup} and get the announced formulas.

\bigskip

We now turn to the convergence of the profile of the process after reaching criticality, that is during the way down.

For every $k\leq K$, we introduce
$$\zeta_k=\inf\{n\ge \tau_{h_k+1}:a_n=a_{\tau_{h_k}}\}$$
the time when the walker returns to its position at time $\tau_{h_k}$ after exploring the connected component of $a_{\tau_{h_k+1}}$ in $S_{\tau_{h_k}}$. On $\textbf{G}$, the difference $\zeta_k-\tau_{h_k}$ is twice the size of this connected component. Recall that, on $\textbf{G}$, the subgraph $S_{\tau_{h_k}}$ is an Erd\H{o}s-Rényi graph with number of vertices in $(1-k\varepsilon)N+O(N^{1/5})$ and connection probability $c/N$. As a consequence, for every $k$, 
$$\lim_{N\to\infty} \frac{\zeta_k-\tau_{h_k}}{N}= (1-k\varepsilon) \rho_{(1-k\varepsilon)c}.$$
Besides $X_{\zeta_k}=X_{\tau_{h_k}}=h_k$. This implies that the K points of the profile taken at times $\zeta_k$ for $k \in \{1,\ldots,K\}$ can be written
\begin{align*}
\left( \frac{\zeta_k}{N} , \frac{X_{\zeta_k}}{N} \right) &= \left(\frac{\tau_{h_k}}{N}+(1-k\varepsilon) \rho_{(1-k\varepsilon)c}+o(1), \frac{h_k^\varepsilon}{N} \right);
\end{align*}
where the term $o(1)$ goes to zero as $N\to\infty$.

Using the same slope arguments as before, we get the announced parametrization. 

\bigskip

\subsection*{Acknowledgments}
N.E. is partially supported by ANR PPPP (ANR-16-CE40-0016). N.E. and G.F. are partially supported by ANR MALIN. L.M. is partially supported by ANR GRAAL (ANR-14-CE25-0014).

All three authors acknowledge the support of Labex MME-DII
(ANR11-LBX-0023-01).

\begin{bibdiv}
\begin{biblist}[\normalsize]

\bib{AKS}{article}{
    AUTHOR = {Ajtai, Mikl\'os},
    AUTHOR = {Koml\'os, J\'anos},
    AUTHOR = {Szemer\'edi, Endre},
     TITLE = {The longest path in a random graph},
   JOURNAL = {Combinatorica},
    VOLUME = {1},
      YEAR = {1981},
    NUMBER = {1},
     PAGES = {1--12},
review={\MR{602411}},
}

\bib{B}{book}{
   author={Bollob\'as, B\'ela},
   title={Random graphs},
   series={Cambridge Studies in Advanced Mathematics},
   volume={73},
   edition={2},
   publisher={Cambridge University Press, Cambridge},
   date={2001},
   pages={xviii+498},
   review={\MR{1864966}},
}
\bib{D}{book}{
   author={Durrett, Rick},
   title={Random graph dynamics},
   series={Cambridge Series in Statistical and Probabilistic Mathematics},
   volume={20},
   publisher={Cambridge University Press, Cambridge},
   date={2007},
   pages={x+212},
   review={\MR{2271734}},
}

\bib{E}{article}{
   author={Erd\H os, Paul},
   title={Problems and results on finite and infinite graphs},
   conference={
      title={Recent advances in graph theory},
      address={Proc. Second Czechoslovak Sympos., Prague},
      date={1974},
   },
   book={
      publisher={Academia, Prague},
   },
   date={1975},
   pages={183--192. (loose errata)},
   review={\MR{0389669}},
}

\bib{ER}{article}{
   author={Erd\H os, Paul},
   author={R\'enyi, Alfred},
   title={On the evolution of random graphs},
   language={English, with Russian summary},
   journal={Magyar Tud. Akad. Mat. Kutat\'o Int. K\"ozl.},
   volume={5},
   date={1960},
   pages={17--61},
   review={\MR{0125031}},
}

\bib{F}{book}{
   author={Frieze, Alan},
   author={Karo\'nski, Michal},
   title={Introduction to Random Graphs},
    publisher={Cambridge University Press, Cambridge},
   date={2016},
   isbn={9781107118508},
}

\bib{KS}{article}{
   author={Krivelevich, Michael},
   author={Sudakov, Benny},
   title={The phase transition in random graphs: a simple proof},
   journal={Random Structures Algorithms},
   volume={43},
   date={2013},
   number={2},
   pages={131--138},
   review={\MR{3085765}},
}

\bib{V}{article}{
   author={de la Vega, Fernandez},
   title={Sur la plus grande longueur des chemins élémentaires de graphes aléatoires},
   journal={Preprint of Laboratoire d’informatique pour les Sciences de l’Homme, C.N.R.S},
   date={1979},
}

\bib{RW}{article}{
   author={Riordan, Oliver},
   author={Wormald, Nicholas},
   title={The diameter of sparse random graphs},
   journal={Combin. Probab. Comput.},
   volume={19},
   date={2010},
   number={5-6},
   pages={835--926},
   review={\MR{2726083}},}		
\end{biblist}
\end{bibdiv}

\bigskip

\noindent \textsc{Nathana\"el Enriquez:} \verb|nathanael.enriquez@u-paris10.fr|, \\
\textsc{Gabriel Faraud:} \verb|gabriel.faraud@u-paris10.fr|,\\
\textsc{Laurent M\'enard:} \verb|laurent.menard@normalesup.org|.

\bigskip

\noindent \textsc{Modal'X, UPL, Univ. Paris Nanterre, F92000 Nanterre France}

\end{document}